\def\a{\alpha}       \def\b{\beta}        \def\g{\gamma}
\def\d{\delta}            
       \def\t{\theta}       \def\f{\phi}
         \def\r{\rho}         
               \def\t{\theta}
\def\G{\Gamma}
\def\D{{\mathbb D}}     
\def\C{{\mathbb C}}
\def\({\left(}       \def\){\right)}
\newtheorem{prop}{\sc Proposition}
\newtheorem{thm}[prop]{\sc Theorem}
\newtheorem{cor}[prop]{\sc Corollary}
\newtheorem{other}{\sc Theorem}              % Other papers' theorems
\begin{document}
%%% Title
\title[Contractive inequalities for mixed norm spaces]{Contractive inequalities for mixed norm spaces and the Beta function}
%%% Information for the first author
\author[A. Llinares]{Adri\'an Llinares}
\address{Departamento de Matem\'aticas, Universidad Aut\'onoma de
Madrid, 28049 Madrid, Spain}
\email{adrian.llinares@uam.es}
%%% Information for the second author
\author[D. Vukoti\'c]{Dragan Vukoti\'c}
\address{Departamento de Matem\'aticas, Universidad Aut\'onoma de
Madrid, 28049 Madrid, Spain} \email{dragan.vukotic@uam.es}
%\dedicatory{Dedicated to ... on the occasion of ...}
%\thanks{}
%%% General info
\subjclass[2010]{30H20}
\keywords{Weighted Bergman spaces, mixed norm spaces, contractive norm inequalities, Beta function.}
\date{2 December, 2021.}
%%%%%%%%%%%%%%%%%%%%%%%%%%%
%%% Abstract
%%%%%%%%%%%%%%%%%%%%%%%%%%%
\begin{abstract}
For a wide range of pairs of mixed norm spaces such that one space is contained in another, we characterize all cases when contractive norm inequalities hold. In particular, this yields such results for many pairs of weighted Bergman spaces. Some inequalities of this type are motivated by their applications in Number Theory and in Mathematical Physics.
\end{abstract}
\maketitle
%%%%%%%%%%%%%%%%%%%%%%%%%%%%%%%%%%%%%%%%%%%%%%%%%%%%%%%%%%%%%%%%
\section{Introduction}
 \label{sec-intro}
%%%%%%%%%%%%%%%%%%%%%%%%%%%%%%%%%%%%%%%%%%%%%%%%%%%%%%%%%%%%%%%%

%%%%%%%%%%%%%%%%%%%%%%%%%%%
\subsection{Preliminaries}
 \label{subsec-prelim}
%%%%%%%%%%%%%%%%%%%%%%%%%%%
Let $0<p$, $q\le \infty$, $0<a<\infty$. The mixed norm space $H(p,q,a)$ is defined as the set of all functions $f$ that are analytic in the unit disk $\D$ and satisfy the condition
\begin{equation}
 \|f\|_{p,q,a} = \( a q \int_0^1 2 \r (1-\r^2)^{aq-1} M_p^q(\r;f) d\r \)^{1/q} <\infty\,, \quad 0<q<\infty\,,
 \label{eq-norm}
\end{equation}
\begin{equation}
 \|f\|_{p,\infty,a} = \sup_{0\le \r<1} (1-\r^2)^{a} M_p(\r;f)<\infty\,,
 \label{eq-infty}
\end{equation}
where
\begin{equation}
 M_p(\r;f) = \( \int_0^{2\pi} |f(\r e^{i\t})|^p \frac{d\t}{2\pi} \)^{1/p}
 \label{eq-int-means}
\end{equation}
denotes the usual integral means of $f$ of order $p$ on the circle $\{z\,:\,|z|=\r\}$. An important special case is the standard weighted Bergman space $A^p_\a = H(p,p,\frac{\a+1}{p})$, $-1<\a<\infty$, with
$$
 \|f\|_{A^p_\a} = \( \int_\D (\a+1) (1-|z|^2)^{\a} |f(z)|^p dA(z)\)^{1/p}<\infty \,,
$$
where $dA(z)=\frac{1}{\pi}\,dx\,dy$ is the normalized Lebesgue area measure on $\D$. The classical Hardy space $H^p$ consisting of all functions analytic in $\D$ for which the finite limit
$$
 \|f\|_{H^p} = \lim_{r\to 1^-} M_p (r;f) = \sup_{0<r<1} M_p (r;f)
$$
exists can be understood as the limit case $H(p,\infty,0)$.
\par
Integrals as in \eqref{eq-norm} had been considered already by Hardy and Littlewood but the space $H(p,q,a)$ was only formally defined and systematically studied later, first by Hardy's student Thomas Flett \cite{F1, F2} and later in \cite{AJ}, \cite{Bl}, among many other papers. Inclusions between different mixed normed spaces in many cases have been known, the information being scattered in the literature. The classification below was completed to include the missing cases in \cite{A, A-th}. In both theorems below, we assume that $0<a$, $b<\infty$ and $0 < p$, $q$, $u$, $v\le \infty$.
\par
%%%%%%%%%%%%%%%%%%%%%%%%%%%
\begin{other} \label{thm-ia1}
If $p\ge u$, then $H(p,q,a)\subset H(u,v,b)$ if and only if either $a<b$ or $a=b$ and $q\le v$.
\end{other}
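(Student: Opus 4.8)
The plan is to reduce the inclusion $H(p,q,a)\subset H(u,v,b)$ (when $p\ge u$) to a one-variable comparison by first disposing of the dependence on the integrability exponents $p,u$ on the circle, and then carefully analyzing the radial weights. First I would invoke the elementary monotonicity $M_u(\r;f)\le M_p(\r;f)$ for $p\ge u$, which is just Jensen's (or Hölder's) inequality applied to the probability measure $d\t/2\pi$ on the circle. This immediately shows $\|f\|_{u,q,a}\le \|f\|_{p,q,a}$ and, more importantly, reduces the inclusion question to the following: for which $(q,a)$ and $(v,b)$ is it true that $\|g\|_{v,b}\lesssim \|g\|_{q,a}$ for every nonnegative, nondecreasing function $g$ on $[0,1)$ (the role of $g$ being played by $M_u(\cdot;f)$, which is nondecreasing because $\log M_u$ is convex in $\log\r$, hence $M_u$ is increasing in $\r$)? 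So the core of the proof is a statement purely about the ``radial'' mixed norm $\big(aq\int_0^1 2\r(1-\r^2)^{aq-1} g(\r)^q\,d\r\big)^{1/q}$ applied to increasing $g$.

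Next I would prove the sufficiency direction. If $a<b$: since $g$ is nondecreasing, $g(\r)\le g(t)$ for $\r\le t$, and one can bound a tail of the $b$-integral near $1$ by the value $g(t)$ times the weight mass, then use that $(1-\r^2)^{b}$ decays faster than $(1-\r^2)^{a}$; the standard trick is to write, for $v<\infty$, $(1-t^2)^{bv} g(t)^v \le C\int_0^t 2\r(1-\r^2)^{av-1}(1-\r^2)^{(b-a)v+1}\,d\r\cdot\sup(\ldots)$ — more cleanly, use the known containment $H(\cdot,v,a)\subset H(\cdot,\infty,a)$ and $H(\cdot,\infty,a)\subset H(\cdot,v,b)$ for $a<b$, each of which is a short direct estimate. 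If $a=b$ and $q\le v$: here the weights are identical, and the inclusion $L^q(\mu)\subset L^v(\mu)$-type reasoning does \emph{not} hold for a general measure, so one must again use monotonicity of $g$ together with the fact that the weight $aq\cdot 2\r(1-\r^2)^{aq-1}\,d\r$ is a probability measure on $[0,1)$; the cleanest route is to show $\|g\|_{v,a}\le \|g\|_{\infty,a}$ fails in general but $\|g\|_{q,a}$ controls $\|g\|_{v,a}$ for increasing $g$ via a layer-cake/Hardy-type argument, or simply cite the mixed-norm inclusion for fixed first index which is classical (Flett).

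Finally, the necessity direction, which I expect to be the main obstacle. One must produce, for each forbidden pair of parameters, a test function $f$ that lies in $H(p,q,a)$ but not in $H(u,v,b)$. The natural candidates are $f_c(z)=(1-z)^{-c}$, whose integral means satisfy the well-known asymptotics $M_p(\r;f_c)\asymp (1-\r)^{1/p-c}$ when $cp>1$ (and are bounded, or logarithmic, in the borderline cases); plugging these into \eqref{eq-norm} reduces membership to convergence of a Beta-type integral $\int_0^1 (1-\r)^{aq-1}(1-\r)^{(1/p-c)q}\,d\r$, i.e. to the sign of $aq+(1/p-c)q$. By tuning $c$ one forces $f_c\in H(p,q,a)$ exactly when $b\le a$ is violated with the wrong inequality on $q,v$. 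The delicate points will be (i) the borderline exponents, where $M_p(\r;f_c)$ picks up logarithmic factors and one needs a logarithmically-adjusted test function such as $(1-z)^{-c}\big(\log\frac{1}{1-z}\big)^{-s}$, and (ii) separating the case $a=b$, $q>v$, where the weights agree and one needs a test function whose means oscillate or concentrate near the boundary in a way that distinguishes $L^q$ from $L^v$ against this specific weight — a lacunary series $f(z)=\sum 2^{k/p'} a_k z^{2^k}$ with carefully chosen coefficients, using Littlewood–Paley/Khinchine estimates for $M_p$, is the standard device here. Assembling these examples to cover every excluded region, and checking the borderline logarithmic cases, is where the real work lies.
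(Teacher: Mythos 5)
The paper does not prove Theorem~\ref{thm-ia1} at all: it is quoted from Ar\'evalo's work \cite{A, A-th}, so there is no internal proof to compare against. Judged on its own merits, your sufficiency argument is sound in outline: the reduction via $M_u(\r;f)\le M_p(\r;f)$ to a statement about nonnegative increasing radial functions, the chain $H(p,q,a)\subset H(p,\infty,a)\subset H(p,v,b)$ for $a<b$, and a pointwise-weight estimate (using $\|g\|_{\infty,a}\le\|g\|_{q,a}$ for increasing $g$) for the case $a=b$, $q\le v$ all work, though the last step is left vague and should be written out.

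The genuine gap is in the necessity direction. Your primary test family $f_c(z)=(1-z)^{-c}$ cannot witness the failure of the inclusion in the regime relevant to this theorem. Since $M_p(\r;f_c)\asymp(1-\r)^{1/p-c}$, membership of $f_c$ in $H(p,q,a)$ is governed by the quantity $a+\tfrac1p$, which is exactly the scaling of Theorem~\ref{thm-ia2}, not of Theorem~\ref{thm-ia1}. Concretely, $f_c\in H(p,q,a)$ forces $c<a+\tfrac1p$, and if $b+\tfrac1u\ge a+\tfrac1p$ (which is perfectly compatible with $b<a$ when $p>u$, e.g.\ $p=2$, $u=1$, $a=1$, $b=0.6$), then automatically $c<b+\tfrac1u$ and $f_c\in H(u,v,b)$; so no choice of $c$, nor any logarithmic modification, separates the two spaces, even though the inclusion is false by the theorem. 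What is needed for the case $b<a$ is a test function whose integral means grow at the same rate for \emph{every} $p$, i.e.\ a Hadamard gap series $\sum a_kz^{n_k}$ with $M_p(r;f)\asymp M_2(r;f)\asymp(1-r)^{-c}$ for all $0<p<\infty$ (Zygmund); choosing $b\le c<a$ then gives $f\in H(p,q,a)\setminus H(u,v,b)$. You do invoke lacunary series, but only for the subcase $a=b$, $q>v$; in fact gap series (with an extra logarithmic factor in the coefficients for that subcase) must carry the entire necessity argument, and the case $p=\infty$ needs separate care since $M_\infty\asymp M_2$ can fail.
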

%%%%%%%%%%%%%%%%%%%%%%%%%%%
\par
%%%%%%%%%%%%%%%%%%%%%%%%%%%
\begin{other} \label{thm-ia2}
If $p<u$, then $H(p,q,a)\subset H(u,v,b)$ if and only if either $a+\frac{1}{p}<b+\frac{1}{u}$ or $a+\frac{1}{p}=b+\frac{1}{u}$ and $q\le v$.
\end{other}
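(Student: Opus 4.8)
The plan is to establish the two implications separately and by fairly classical means: for the sufficiency (the harder direction) a Hardy--Littlewood subharmonicity estimate will reduce everything to Theorem~\ref{thm-ia1}, while for the necessity the extremal functions $(1-z)^{-s}$ together with their logarithmic perturbations will provide the obstructions.

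\textbf{Sufficiency.} I would first establish the auxiliary inclusion
\[
 H(p,q,a)\subseteq H\Bigl(u,q,\,a+\tfrac1p-\tfrac1u\Bigr)\,,\qquad p<u\,,
\]
with bounded inclusion map. Its engine is the classical pointwise bound
\[
 M_u(\r;f)\le \frac{C}{(1-\r)^{1/p-1/u}}\,M_p\Bigl(\tfrac{1+\r}{2};f\Bigr)\,,\qquad 0<p<u\le\infty\,,
\]
which follows from the subharmonicity of $|f|^p$: a mean-value argument on the disk $D\bigl(z,\tfrac{1-|z|}{2}\bigr)$, using that $\r\mapsto M_p(\r;f)$ is non-decreasing, yields the case $u=\infty$, and H\"older's inequality in the form $M_u^u\le M_\infty^{u-p}M_p^p$ then handles $u<\infty$. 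Inserting this bound into \eqref{eq-norm} (or \eqref{eq-infty}) written for $H\bigl(u,q,a+\tfrac1p-\tfrac1u\bigr)$, the extra factor cancels the shift $\tfrac1p-\tfrac1u$ in the weight exponent exactly; the change of variable $\r\mapsto\tfrac{1+\r}{2}$ and the comparison $1-\r^2\asymp1-\r$ on $[\tfrac12,1]$ then bound the result by a constant multiple of $\|f\|_{p,q,a}^q$, proving the inclusion. With this in hand I would finish by applying Theorem~\ref{thm-ia1} with equal first indices: if $a+\tfrac1p<b+\tfrac1u$, then $a+\tfrac1p-\tfrac1u<b$, so $H\bigl(u,q,a+\tfrac1p-\tfrac1u\bigr)\subseteq H(u,v,b)$; if instead $a+\tfrac1p=b+\tfrac1u$ and $q\le v$, then $a+\tfrac1p-\tfrac1u=b$ and $H(u,q,b)\subseteq H(u,v,b)$. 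Composing the two inclusions gives $H(p,q,a)\subseteq H(u,v,b)$.

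\textbf{Necessity.} Assume $H(p,q,a)\subseteq H(u,v,b)$. From $|1-\r e^{i\t}|^2\asymp(1-\r)^2+\t^2$ on $[-\pi,\pi]$ one obtains, for $s>1/p$,
\[
 M_p\bigl(\r;(1-z)^{-s}\bigr)\asymp(1-\r)^{1/p-s},\qquad
 M_p\Bigl(\r;(1-z)^{-s}\bigl(\log\tfrac{e}{1-z}\bigr)^{-\b}\Bigr)\asymp(1-\r)^{1/p-s}\bigl(\log\tfrac{e}{1-\r}\bigr)^{-\b}
\]
as $\r\to1^-$, the logarithmic factor being comparable on each circle to an honest slowly varying function of $1-\r$. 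Feeding the first asymptotics into \eqref{eq-norm}--\eqref{eq-infty} shows that $(1-z)^{-s}\in H(p,q,a)$ exactly when $s<a+\tfrac1p$ (for $q<\infty$; with ``$\le$'' when $q=\infty$), while $(1-z)^{-s}\notin H(u,v,b)$ once $s\ge b+\tfrac1u$ (strictly so when $v=\infty$). Hence $a+\tfrac1p>b+\tfrac1u$ is impossible, for then any $s\in(b+\tfrac1u,\,a+\tfrac1p)$ would exhibit a function of $H(p,q,a)$ lying outside $H(u,v,b)$; thus $a+\tfrac1p\le b+\tfrac1u$. If equality holds, set $s=a+\tfrac1p=b+\tfrac1u$ and note $s>\max\{1/p,1/u\}$ since $a,b>0$, so both asymptotics above apply (with exponents $1/p-s$ and $1/u-s$). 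Now the weight exponents cancel completely and, for $q,v<\infty$, the function $(1-z)^{-s}\bigl(\log\tfrac{e}{1-z}\bigr)^{-\b}$ belongs to $H(p,q,a)$ iff $\b q>1$ and to $H(u,v,b)$ iff $\b v>1$; moreover when $q=\infty$ it still lies in $H(p,\infty,a)$ for every $\b>0$, while $v=\infty$ makes $q\le v$ automatic. Therefore, if $q>v$, the choice $\b=1/v$ produces a function in $H(p,q,a)\setminus H(u,v,b)$, contradicting the inclusion; so $q\le v$, which is the remaining alternative.

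\textbf{Main obstacle.} The genuinely technical point is the asymptotic evaluation of the integral means of the test functions --- especially of the logarithmically perturbed ones in the critical range $sp>1$ --- together with the careful treatment of the several endpoint cases $q,v\in\{\text{finite},\infty\}$; the subharmonicity bound behind the sufficiency is classical, and once the reduction through Theorem~\ref{thm-ia1} is in place the exponent arithmetic there is entirely mechanical.
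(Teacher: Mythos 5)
Your argument is sound, but note that the paper does not prove Theorem~\ref{thm-ia2} at all: it is quoted as a known classification from the literature (Ar\'evalo's work \cite{A, A-th}, building on Flett), so there is no in-paper proof to compare against. Your route --- the Hardy--Littlewood estimate $M_u(\r;f)\le C(1-\r)^{-(1/p-1/u)}M_p(\tfrac{1+\r}{2};f)$ giving the bounded inclusion $H(p,q,a)\subset H(u,q,a+\tfrac1p-\tfrac1u)$ followed by the equal-first-index case of Theorem~\ref{thm-ia1}, and the test functions $(1-z)^{-s}$ with logarithmic perturbations at the critical exponent $s=a+\tfrac1p=b+\tfrac1u$ for necessity --- is precisely the standard argument used in those sources, and the exponent bookkeeping and endpoint cases ($q$ or $v$ infinite) are all handled correctly.
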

%%%%%%%%%%%%%%%%%%%%%%%%%%%
As a corollary for the weighted Bergman spaces, we have the well-known results (see, for example, \cite[Theorem~1.3]{BKV}): when $p\ge q$, the inclusion $A^p_\a\subset A^q_\b\/$ holds only in the trivial case $p=q$ and $\a=\b$ or when $\frac{\a+1}{p} < \frac{\b+1}{q}$. When $p<q$, we have the inclusion $A^p_\a\subset A^q_\b\/$ if and only if \ $\frac{\a+2}{p} < \frac{\b+2}{q}$.

%%%%%%%%%%%%%%%%%%%%%%%%%%%
\subsection{Motivation and results}
 \label{subsec-motivation}
%%%%%%%%%%%%%%%%%%%%%%%%%%%
An important and natural question is: when does the contractive inequality:
$$
 \|f\|_{u,v,b} \le \|f\|_{p,q,a}
$$
hold for all $f\in H(p,q,a)$? Note that if it holds, it must be sharp since the constant function one has unit norm in all mixed norm spaces. A more specific question is when is the inclusion between two weighted Bergman spaces contractive.
\par\smallskip
As the main result of this note, we prove that, if $p\ge u$, the inclusion $H(p,q,a)\subset H(u,v,b)$ is contractive if and only if either \ (1) $q\le v$ and $a\le b$, \ or \ (2) $q>v$ and $a q \le b v$. \par
As a corollary, whenever $p\ge q$, we obtain that the contractive inequality for weighted Bergman spaces: \ $\|f\|_{A^q_\b} \le \|f\|_{A^p_\a}$ \ holds if and only if $\a\le\b$.
\par\smallskip
Some motivation for this study is in order. Several papers in the literature have been devoted to the study of contractive inequalities under the conditions of Theorem~\ref{thm-ia2}. For example, a very  natural question is whether the following special class of inequalities as above for weighted Bergman norms
\begin{equation}
 \|f\|_{A^p_{cp-2}} \le \|f\|_{A^q_{cq-2}} \,, \qquad \frac{1}{c}<q<p <\infty\,,
 \label{eq:contract-ineq}
\end{equation}
holds for every fixed $f$ analytic in $\D$. This problem turns out to be extremely difficult but its solution would be of interest for several applications. For example, a conjecture of Lieb and Solovej \cite{LS} related to certain inequalities for entropies can be reformulated for the disk as the particular case $c=\a+\frac12$ and $q=2$ of \eqref{eq:contract-ineq}, that is:
\begin{equation}
 \|f\|_{A^p_{\frac{2\a+1}{2}p-2}} \le \|f\|_{A^2_{2\a-1}}\,, \qquad 2\le p<\infty\,, \quad 0<\a<\infty\,.
 \label{eq:Lieb-Solovej}
\end{equation}
This conjecture has so far been  proved only in certain special cases, for example, $p=n/2$, where $n$ is a positive integer. Different statements and proofs can be found in the papers \cite{Bu}, \cite{BBHOP} and in the most recent works \cite{LS} and \cite{BGS}. In the classical Fock (or Bargmann-Segal) spaces $F^p_\a$ of entire functions which are $p$-integrable with respect to a Gaussian measure in the plane with parameter $\a$, there is also a contractive inequality for the spaces $F^p_\a$ and $F^\infty_\a$ (\textit{cf.\/} \cite[p. 40]{Z}). Contractive inequalities for generalized Fock spaces in $\C^n$ were studied in \cite{C}, again in relation to the Wehrl entropy conjecture.
\par
Recently, in \cite{BBHOP} a conjecture was formulated that  \eqref{eq:contract-ineq} should hold for $c=1/2$; this was already implicitly suggested in \cite{BOSZ}. This would have some important consequences. It is well known that, for $0<c<\infty$, the spaces $A^{p}_{cp-2}$ become larger as $p\/$ increases. Also, the Hardy space $H^{1/c}$ can be viewed as the limit case of these spaces as $p\to \frac{1}{c}+$, in the sense that
$$
 \|f\|_{H^{1/c}} = \lim_{p\to 1/c+} \|f\|_{p,\,cp-2}\,.
$$
Thus, if true, inequality \eqref{eq:contract-ineq} would also readily imply the following contractive inequality mentioned in  \cite[Problem~2.1,~p.~53]{Pa}:
\begin{equation}
 \|f\|_{A^p_{cp-2}} \le \|f\|_{H^{1/c}} \,, \qquad \frac{1}{c}<p<\infty
 \label{eq:contract-ineq-w}
\end{equation}
by taking the limit as $q\to 1/c+$. In \cite{BOSZ}, a conjecture was posed that
\begin{equation}
 \|f\|_{A^p_{\frac{p}{2}-2}} \le \|f\|_{H^2}\,, \quad  2<p<\infty\,,
 \label{eq:Seip-et-al}
\end{equation}
which is a special case of \eqref{eq:contract-ineq-w} with $c=1/2$. Important related inequalities have been proved in \cite{BBHOP}. Proving such inequality would have immediate applications to the theory of Hardy spaces of Dirichlet series and, hence, further direct consequences in Number Theory; \textit{cf.\/} \cite{BHS}.
\par\smallskip
In summary, the case $p<u$ (scenario of Theorem~\ref{thm-ia2}) seems to be very difficult even in special cases. While some partial results on these cases will be treated elsewhere, in Section~\ref{sect-res-pfs} this note we devote our effort to characterizing completely the cases when norm inclusions are contractive under the condition of Theorem~\ref{thm-ia1}: $p\ge u$. It is our belief that this case should  also be of some interest and that it seems appropriate to carry out a systematic study of when the inclusions can be contractive in all cases.
\par
We also list some direct consequences that might be of independent interest, such as certain inequalities for the Beta function which we have not been able to find elsewhere in the literature. This is done in Section~\ref{sec-ineq-Beta}.

%%%%%%%%%%%%%%%%%%%%%%%%%%%%%%%%%%%%%%%%%%%%%%%%%%%%%%%%%%%%%%%%
\section{Contractive norm inequalities in the case $p\ge u$}
 \label{sect-res-pfs}
%%%%%%%%%%%%%%%%%%%%%%%%%%%%%%%%%%%%%%%%%%%%%%%%%%%%%%%%%%%%%%%%
In what follows, in simplify the notation, for a fixed function $f$ analytic in the unit disk, we shall write throughout
$$
 m_p(r) = M_p (\sqrt{r};f)\,, \qquad 0<r<1\,.
$$
Thus, the simple change of variable $\r^2=r$ in \eqref{eq-norm} yields
\begin{equation}
 \|f\|_{p,q,a} = \( \int_0^1 a q (1-r)^{a q-1} m_p^q (r) dr \)^{1/q}\,.
 \label{eq-simpl}
\end{equation}
This formula will be used frequently in the rest of this note. H\"older's inequality shows that $M_p (r;f)$, and hence also $m_p (r)$, is an increasing function of of $p$ for each fixed $r$.
\par
Also, by a well-known theorem of Hardy \cite[Chapter~1]{D}, $M_p (r;f)$  is an increasing function of $r$ for each fixed $p$. Moreover, it is strictly increasing whenever $f$ is not constant. This additional fact, although not explicit in \cite{D}, can be deduced immediately from the Hardy-Stein identity \cite[p. 174]{Po}. See also \cite{XZ} for a proof in the unit ball and some applications.
%%%%%%%%%%%%%%%%%%%%%%%%%%%
\subsection{Positive results on contractive inclusions}
 \label{subsec-pos-res}
%%%%%%%%%%%%%%%%%%%%%%%%%%%
In this subsection, we show that the inclusion under the conditions of Theorem~\ref{thm-ia1} is contractive in two large classes of cases.
\par
We first address the case when $p\ge u$, $q\le v$, and $a=b$.
\par
%%%%%%%%%%%%%%%%%%%%%%%%%%%
\begin{prop} \label{prop-aa}
Let $\infty\ge p\ge u>0$, $0<q\le v\le\infty$, $0<a<\infty$. Then \ $\|f\|_{u,v,a} \le \|f\|_{p,q,a}$.
\end{prop}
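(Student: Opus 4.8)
The plan is to reduce the general statement to two one-variable monotonicity facts by handling the three parameters $p$, $q$, $a$ separately. Since $a=b$ is fixed here, only $p\ge u$ and $q\le v$ are in play, and I would split the inequality as
$$
 \|f\|_{u,v,a} \le \|f\|_{u,q,a} \le \|f\|_{p,q,a}.
$$
The second inequality is immediate: by Hölder's inequality $m_p(r)$ is increasing in $p$ for each fixed $r$ (this is noted in the excerpt), so $m_u^q(r)\le m_p^q(r)$ pointwise, and plugging into the integral representation \eqref{eq-simpl} gives $\|f\|_{u,q,a}\le\|f\|_{p,q,a}$ directly. So the whole content is in the first inequality, which concerns only the exponents $q\le v$ on the ``$\theta$-side'' of the norm with the first index and the weight parameter held fixed.

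For the first inequality I would set $g(r)=m_u(r)$ and prove the general fact: for $0<q\le v\le\infty$ and $0<a<\infty$,
$$
 \left(\int_0^1 a v\,(1-r)^{av-1} g^v(r)\,dr\right)^{1/v}
   \le \left(\int_0^1 a q\,(1-r)^{aq-1} g^q(r)\,dr\right)^{1/q}
$$
for any nonnegative increasing $g$ on $(0,1)$ (and the obvious modification when $v=\infty$). The key point is that $d\mu_s(r)=as(1-r)^{as-1}\,dr$ is a probability measure on $(0,1)$ for every $s>0$, so each side is an $L^q$- resp. $L^v$-average of $g$ against a probability measure, but the measures differ. The natural route is: first observe that for a \emph{constant} $g$ the inequality is an identity (both sides equal the constant), which is why one cannot simply invoke Jensen against a single measure; instead one exploits that $g$ is increasing, so that as the weight parameter grows from $q$ to $v$ the measure $\mu_s$ shifts its mass toward $r=1$, i.e.\ toward where $g$ is largest. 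I would make this precise by a change of variable reducing to the case $a=1$ (replace $g(r)$ by $g(1-(1-r)^{?})$ — more cleanly, substitute $t=1-(1-r)^{a}$ to absorb $a$), and then compare the two probability measures $q(1-r)^{q-1}dr$ and $v(1-r)^{v-1}dr$ via a stochastic-dominance / rearrangement argument, or equivalently by writing $g^q$ as an integral of indicator functions $g^q(r)=\int_0^\infty \mathbf 1_{\{g^q(r)>\lambda\}}\,d\lambda$ and checking the scalar inequality level-set by level-set, where each level set is an interval $(r_\lambda,1)$.

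The main obstacle, and the step I would spend the most care on, is exactly this comparison of the two weighted averages: it is the heart of the proposition and the only place the hypothesis $q\le v$ (rather than the reverse) is used, together with the monotonicity of $g$. A clean way to finish, which I would try first, is the substitution that turns $\mu_s$ into the law of $U^{1/(as)}$ for $U$ uniform on $(0,1)$: then the left-hand integrand becomes $g(1-U^{1/(av)})^v$ and, since $v\ge q$ forces $U^{1/(av)}\ge U^{1/(aq)}$ pointwise, monotonicity of $g$ gives a pointwise comparison of the random variables before taking $L^q$ versus $L^v$ norms; combined with the elementary $\|X\|_q\le\|X\|_\infty$-type inequalities for a single random variable this should close the argument. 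The case $v=\infty$ is then just the limiting statement $\esssup_r (1-r)^a g(r) = \lim_{v\to\infty}(\int a v (1-r)^{av-1}g^v)^{1/v}$, handled by the standard $L^v\to L^\infty$ convergence against the probability measures $\mu_v$, using again that $g$ is increasing so the essential supremum is attained in the limit $r\to 1^-$.
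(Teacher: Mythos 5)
Your outer decomposition $\|f\|_{u,v,a}\le\|f\|_{u,q,a}\le\|f\|_{p,q,a}$ is fine, and the second inequality is indeed immediate from the monotonicity of $m_p$ in $p$. The problem is that your proposed proof of the first inequality --- the entire content of the proposition --- does not close. After the substitution $s=(1-r)^{as}$ you correctly arrive at comparing
$$
\Bigl(\int_0^1 g\bigl(1-s^{1/(av)}\bigr)^v\,ds\Bigr)^{1/v}
\quad\text{with}\quad
\Bigl(\int_0^1 g\bigl(1-s^{1/(aq)}\bigr)^q\,ds\Bigr)^{1/q},
$$
and the pointwise domination $g(1-s^{1/(av)})\le g(1-s^{1/(aq)})$ does hold. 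But the two effects you then want to combine pull in \emph{opposite} directions: the measure shift lowers the integrand, while passing from the $L^q$- to the $L^v$-average against the probability measure $ds$ \emph{raises} the norm (Jensen gives $\|X\|_{L^q}\le\|X\|_{L^v}$ for $q\le v$, not the reverse). Concretely, with $g(r)=r$, $a=q=1$, $v=2$, your chain gives $\|X_v\|_{L^2}\le\|X_q\|_{L^2}=1/\sqrt3\approx0.577$, which overshoots the right-hand side $\|X_q\|_{L^1}=1/2$; so the intermediate quantity $\|X_q\|_{L^v}$ sits \emph{above} both sides and the argument stalls. This is precisely why the paper uses this change-of-variables/Hölder route only in the complementary regime $q\ge v$, $aq\le bv$ (Proposition~\ref{prop-ab}), where both effects are favorable, and proves Proposition~\ref{prop-aa} by an entirely different mechanism.

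The missing idea is a genuine one, not a detail: one must exploit the monotonicity of $g$ in a way that beats the Jensen loss. The paper does this by introducing the tail function
$$
\f(R)=\Bigl(\int_R^1 aq(1-r)^{aq-1}m_p^q(r)\,dr\Bigr)^{v/q}-\int_R^1 av(1-r)^{av-1}m_u^v(r)\,dr,
$$
showing $\f(R)\to0$ as $R\to1^-$ and $\f'\le0$ on $[0,1)$; the derivative computation uses the lower bound $\int_R^1 aq(1-r)^{aq-1}m_p^q\,dr\ge m_p^q(R)(1-R)^{aq}$ (monotonicity of $m_p$) together with $v/q-1\ge0$, and this is where the conflict is resolved. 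Your level-set reformulation (reducing to decreasing $h$ with $h(t)=1-r_t$) is a correct reduction but leaves you with an inequality of exactly the same type, so it does not make progress by itself. Unless you supply an argument of the differential or rearrangement type that actually uses monotonicity quantitatively, the first inequality remains unproved; the $v=\infty$ limiting case is then also contingent on it.
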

%%%%%%%%%%%%%%%%%%%%%%%%%%%
\begin{proof}
We first consider the case $v<\infty$. For $0\le R<1$, consider the function
$$
 \f(R) = \( \int_R^1 a q (1-r)^{a q-1} m_p^q (r) dr \)^{v/q} - \int_R^1 a v (1-r)^{a v-1} m_u^v (r) dr
$$
In order to prove the desired inequality, it suffices to show that $\f(0)\ge 0$. Next, it is enough to check that $\f$ is decreasing in $[0,1)$ since $\lim_{R\to 1^-} \f(R)=0$.
\par
We first note that, since $m_p(r)$ is an increasing function of $r$, we have
$$
 \int_R^1 a q (1-r)^{a q-1} m_p^q (r) dr \ge m_p^q (R) \int_R^1 a q (1-r)^{a q-1} dr = m_p^q (R) (1-R)^{a q}\,.
$$
Taking into account this inequality and the fact that $v/q-1\ge 0$ by our assumptions, the fundamental theorem of Calculus yields
\begin{eqnarray*}
 \f^\prime (R) &=& a v (1-R)^{a v-1} m_u^v (R) - a v (1-R)^{a q-1} m_p^q  (R) \( \int_R^1 a q (1-r)^{a q-1} m_p^q (r) dr \)^{v/q - 1}
\\
 & \le & a v (1-R)^{a v-1} m_u^v (R) - a v (1-R)^{a q-1} m_p^q (R) \cdot m_p^{v - q} (R) (1-R)^{a v - a q}
\\
 & = & a v (1-R)^{a v-1} m_u^v (R) - a v (1-R)^{a v-1} m_p^v (R)
 \le 0\,,
\end{eqnarray*}
since $m_u (R)\le m_p (R)$ for all $R\in [0,1)$ by the assumption $p\ge u$. This completes the proof.
\par\medskip
We now discuss the case $v=\infty$. There are two possibilities: $q=\infty$ and $q<\infty$.
\par
If $q=\infty$, the corresponding estimates are quite obvious. For every $\r\in [0,1)$ we have
$$
 (1-\r^2)^a M_u(\r;f) \le (1-\r^2)^a M_p(\r;f) \le \|f\|_{p,\infty,a}
$$
since $p\ge u$. This readily implies \ $\|f\|_{u,\infty,a} \le \|f\|_{p,\infty,a}$.
\par
For $0<q<\infty$, recalling that the integral means $M_p(r;f)$ are increasing in $p$ for a fixed $r$ and increasing in $r$ for a fixed $p$, for every $r\in [0,1)$ we obtain
\begin{eqnarray*}
 (1-r^2)^{aq} M_u^q(r;f) &\le & (1-r^2)^{aq} M_p^q(r;f)
\\
 &=& a q \int_r^1 2\r (1-\r^2)^{aq-1} d\r \cdot  M_p^q(r;f)
\\
 &\le & a q \int_r^1 2\r (1-\r^2)^{aq-1} M_p^q(\r;f) d\r
\\
 &\le & \|f\|_{p,q,a}^q \,.
\end{eqnarray*}
It follows from here that \ $\|f\|_{u,\infty,a} \le \|f\|_{p,q,a}$.
\end{proof}
%%%%%%%%%%%%%%%%%%%%%%%%%%%
\par
We now consider a different class of cases.
%%%%%%%%%%%%%%%%%%%%%%%%%%%
\begin{prop} \label{prop-ab}
Let $\infty\ge p\ge u>0$, $\infty>q\ge v>0$, $a q \le b v$. Then \ $\|f\|_{u,v,b} \le \|f\|_{p,q,a}$.
\end{prop}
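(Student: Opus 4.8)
The plan is to prove the inequality by a chain of three elementary comparisons, each isolating one of the three hypotheses $p\ge u$, $aq\le bv$, and $q\ge v$. Throughout write $m=m_p$; by the facts recalled before the statement, $m$ is a nonnegative increasing function on $(0,1)$ and, since $p\ge u$, $m_u(r)\le m_p(r)=m(r)$ for every $r$ (integral means increase with the exponent). We may assume $\|f\|_{p,q,a}<\infty$, and note that $q\ge v$ and $q<\infty$ force $v<\infty$ as well. The first step is immediate: since $m_u\le m_p$ pointwise, \eqref{eq-simpl} gives $\|f\|_{u,v,b}\le \|f\|_{p,v,b}$, so it suffices to treat the case $u=p$.

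The key step is the claim that $\|f\|_{p,v,b}\le \|f\|_{p,v,\,aq/v}$; equivalently, that $\|f\|_{p,v,\beta}$ is nonincreasing in the weight parameter $\beta$, which suffices here because $aq\le bv$ gives $aq/v\le b$ (and $aq/v>0$). By \eqref{eq-simpl}, $\|f\|_{p,v,\beta}^{\,v}=\int_0^1 m^v\,d\mu_\beta$, where $d\mu_\beta=\beta v(1-r)^{\beta v-1}\,dr$ is a probability measure on $(0,1)$ with $\mu_\beta\big((s,1)\big)=(1-s)^{\beta v}$. Since $m^v$ is increasing, a layer-cake computation (Tonelli) gives $\int_0^1 m^v\,d\mu_\beta=\int_0^\infty \mu_\beta\big(\{m^v>t\}\big)\,dt$, and for each $t$ the superlevel set $\{m^v>t\}$ is an interval with right endpoint $1$, say $(s_t,1)$ (the left endpoint being irrelevant for the non-atomic measure $\mu_\beta$); hence $\mu_\beta\big(\{m^v>t\}\big)=(1-s_t)^{\beta v}$, which is nonincreasing in $\beta$ because $0\le 1-s_t\le 1$. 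Integrating in $t$ yields the claim.

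For the last step, observe that $\|f\|_{p,v,\,aq/v}$ and $\|f\|_{p,q,a}$ are built from the \emph{same} probability measure $d\nu=aq(1-r)^{aq-1}\,dr$: indeed $\|f\|_{p,v,\,aq/v}^{\,v}=\int_0^1 m^v\,d\nu$ and $\|f\|_{p,q,a}^{\,q}=\int_0^1 m^q\,d\nu$. Since $v\le q$, the monotonicity of $L^t$-norms with respect to a probability measure (a direct application of H\"older's inequality) gives $\big(\int_0^1 m^v\,d\nu\big)^{1/v}\le\big(\int_0^1 m^q\,d\nu\big)^{1/q}$. Chaining the three inequalities proves the proposition, and the case $p=\infty$ requires no modification since $m_\infty$ is still increasing by the maximum modulus principle.

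I expect the middle step — monotonicity of the norm in the weight parameter — to be the only real point of substance; the mild subtlety there is that $m$ need not be bounded near $r=1$, so a naive integration by parts would leave a boundary term to control. Passing to the distribution function of $m^v$, as above, sidesteps this entirely and reduces the statement to the trivial inequality $(1-s)^{\beta v}\le(1-s)^{\gamma v}$ for $\beta\ge\gamma$.
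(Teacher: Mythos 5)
Your proof is correct, and it uses the same three ingredients as the paper's argument --- monotonicity of $m_p$ in $p$, monotonicity of the norm in the weight parameter, and H\"older's inequality against a probability measure to handle $q\ge v$ --- but packages them differently. The paper runs a single chain: substitute $s=(1-r)^{bv}$ to rewrite $\|f\|_{u,v,b}^{v}$ as $\int_0^1 m_u^v(1-s^{1/(bv)})\,ds$, apply H\"older with exponent $q/v$ on the unit interval, replace $m_u$ by $m_p$, and then use that $1-s^{1/t}$ decreases in $t$ (with $aq\le bv$ and $m_p$ increasing) before undoing the substitution with $s=(1-r)^{aq}$. Your layer-cake computation for the middle step is exactly the distributional counterpart of that substitution-plus-monotonicity argument: the observation that $\mu_\beta(\{m^v>t\})=(1-s_t)^{\beta v}$ is nonincreasing in $\beta$ is the same fact as the monotonicity of $1-s^{1/t}$ read through the inverse of the distribution function. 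What your decomposition buys is that the monotonicity of $\|f\|_{p,v,\beta}$ in $\beta$ is isolated as a standalone lemma with a direct proof --- this is precisely the paper's Corollary~\ref{cor-simple}, which the paper instead deduces \emph{from} Proposition~\ref{prop-ab}, so your route gives that corollary independently and makes the role of each hypothesis transparent; the paper's single chain is more compact. One small point worth making explicit in your step 3 is that $\nu$ has total mass $1$ (it does: $\int_0^1 aq(1-r)^{aq-1}dr=1$), since the $L^v(\nu)\le L^q(\nu)$ comparison fails for general finite measures.
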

%%%%%%%%%%%%%%%%%%%%%%%%%%%
\begin{proof}
It is useful to apply the change of variable $s=(1-r)^{b v}$. Applying, in the following order, H\"older's inequality (note that $q\ge v$ and we have a unit measure), the assumption that $a q\le b v$ and the fact that the function $F(t)=1-s^{1/t}$ decreases with $t$ for each fixed $s\in [0,1)$, recalling that $m_u (\r)\le m_p (\r)$ for each $\r\in [0,1)$ (since $p\ge u$ by assumption), and finally undoing the similar change of variable $s=(1-r)^{a q}$, we obtain
\begin{eqnarray*}
 \|f\|_{u,v,b}^q &=& \( \int_0^1 b v (1-r)^{b v -1} m_u^v (r) d r \)^{q/v} = \( \int_0^1 m_u^v (1 - s^{1/(b v)}) d s \)^{q/v}
\\
 & \le & \int_0^1 m_u^q (1 - s^{1/(b v)}) d s
 \le\int_0^1 m_p^q (1 - s^{1/(b v)}) d s
 \le \int_0^1 m_p^q (1 - s^{1/(a q)}) d s
\\
 &=& \int_0^1 a q (1-r)^{a q -1} m_p^q (r) d r
 = \|f\|_{p,q,a}^q \,.
\end{eqnarray*}
This proves the statement.
\end{proof}
%%%%%%%%%%%%%%%%%%%%%%%%%%%
\par
The following trivial corollary will be useful later as an auxiliary step. The case $q=\infty$ does not follow directly from Proposition~\ref{prop-ab} but is deduced rather easily.
%%%%%%%%%%%%%%%%%%%%%%%%%%%
\begin{cor} \label{cor-simple}
Let $0<p\le\infty$, $0<q\le\infty$, $0<a$, $b<\infty$, with $a\le b$. Then
$$
 \|f\|_{p,q,b} \le \|f\|_{p,q,a}\,.
$$
\end{cor}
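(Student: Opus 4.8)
The plan is to deduce this from the two propositions just proved, taking $u=p$ and $v=q$ and treating the finite and infinite cases for $q$ separately. For $0<q<\infty$, I would simply invoke Proposition~\ref{prop-ab} with $u=p$ and $v=q$: its hypotheses $p\ge u$ and $q\ge v$ hold trivially with equality, and the condition $aq\le bv$ becomes $aq\le bq$, which is exactly $a\le b$. So the inequality $\|f\|_{p,q,b}\le\|f\|_{p,q,a}$ follows at once. (One could equally use Proposition~\ref{prop-aa} with $q\le v$, but since here $q=v$ either route works; I would cite whichever reads more cleanly.)

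For $q=\infty$, as the statement of the corollary warns, Proposition~\ref{prop-ab} does not literally apply, so I would argue directly from the definition \eqref{eq-infty}. Here $\|f\|_{p,\infty,b}=\sup_{0\le\r<1}(1-\r^2)^{b}M_p(\r;f)$ and similarly with $a$ in place of $b$. Since $0\le 1-\r^2\le 1$ for $\r\in[0,1)$ and $b\ge a>0$, we have $(1-\r^2)^{b}\le(1-\r^2)^{a}$ pointwise, hence $(1-\r^2)^{b}M_p(\r;f)\le(1-\r^2)^{a}M_p(\r;f)$ for every $\r$; taking the supremum over $\r\in[0,1)$ gives $\|f\|_{p,\infty,b}\le\|f\|_{p,\infty,a}$.

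I do not anticipate any genuine obstacle here — this really is a "trivial corollary" as the authors say. The only mild subtlety worth a sentence is the bookkeeping of which earlier result covers which parameter range: Propositions~\ref{prop-aa} and~\ref{prop-ab} are each stated only for $v<\infty$ (or with $v=\infty$ handled by a separate sub-case inside the proof of Proposition~\ref{prop-aa}), whereas the corollary also wants $q=\infty$; so the write-up must explicitly note the split $q<\infty$ versus $q=\infty$ and handle the latter by the one-line pointwise comparison above. Beyond that, it is immediate.
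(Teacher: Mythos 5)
Your proof is correct and matches the paper's intended route: the authors give no written proof, but their remark preceding the corollary ("The case $q=\infty$ does not follow directly from Proposition~\ref{prop-ab} but is deduced rather easily") indicates exactly your split — Proposition~\ref{prop-ab} with $u=p$, $v=q$ for finite $q$, and the pointwise comparison $(1-\r^2)^b\le(1-\r^2)^a$ for $q=\infty$. One small correction to your parenthetical aside: Proposition~\ref{prop-aa} is \emph{not} an alternative here, since it keeps the same weight parameter $a$ on both sides of the inequality and therefore cannot produce the passage from $a$ to $b$; only Proposition~\ref{prop-ab} does that.
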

%%%%%%%%%%%%%%%%%%%%%%%%%%%
\smallskip

%%%%%%%%%%%%%%%%%%%%%%%%%%%
\subsection{Counterexamples for the contractive property}
 \label{subsec-counterexs}
%%%%%%%%%%%%%%%%%%%%%%%%%%%
We recall that the Euler Beta function, defined in the usual way as
$$
 B(p,q) = \int_0^1 r^{p-1} (1-r)^{q-1} dr \,, \qquad p\,, q > 0\,,
$$
has the symmetry property that $B(p,q) = B(q,p)$. It also satisfies
the well-known identity
$$
 B(p,q) = \frac{\G(p) \G(q)}{\G(p+q)}\,,
$$
where
$$
 \G (p) = \int_0^\infty t^{p-1} e^{-t}\, dt\,, \qquad p>0\,,
$$
is the standard Euler Gamma function. The basic properties of these functions can be found in a number of texts, \textit{e.g.}, in \cite[Chapter~9]{Di}.
\par
Using some properties of the Euler integrals and appropriately chosen test functions, we will now show that, under the assumptions of Theorem~\ref{thm-ia1}, the inclusions between our mixed norm spaces can only be contractive in the cases covered in Subsection~\ref{subsec-pos-res}.
\par
%%%%%%%%%%%%%%%%%%%%%%%%%%%
\begin{prop} \label{prop-counterex}
Let $\infty\ge p \geq u>0$, $\infty\ge q > v>0$ and $0<a < b<\infty$. If $aq > bv$, then the inclusion $H(p, q, b) \subset H(u, v, b)$ is not contractive.
\end{prop}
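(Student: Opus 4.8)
The plan is to exhibit, for each admissible choice of parameters with $aq>bv$, a concrete test function $f$ (or a one-parameter family) whose norms violate $\|f\|_{u,v,b}\le\|f\|_{p,q,b}$. The natural candidates are monomials $f(z)=z^n$, for which $M_p(\r;f)=\r^n$ for every $p$, so that $m_p(r)=r^{n/2}$ independently of $p$; thus the left side of the excerpt is misleading in one respect — for monomials the $p$ and $u$ dependence disappears, and what remains is a genuine comparison between the $(q,b)$-weight and the $(v,b)$-weight. (Note the statement compares $H(p,q,b)$ with $H(u,v,b)$, i.e. the \emph{same} exponent $b$ on both sides.) For a monomial, using \eqref{eq-simpl} with the substitution $r\mapsto r$ and then $t=r$, one computes
\[
 \|z^n\|_{p,q,b}^q = \int_0^1 bq\,(1-r)^{bq-1} r^{nq/2}\,dr = bq\,B\!\left(\tfrac{nq}{2}+1,\,bq\right),
\]
and similarly $\|z^n\|_{u,v,b}^v = bv\,B\!\left(\tfrac{nv}{2}+1,\,bv\right)$. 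So the inequality to be \emph{violated} is
\[
 \left(bv\,B\!\left(\tfrac{nv}{2}+1,bv\right)\right)^{1/v} \le \left(bq\,B\!\left(\tfrac{nq}{2}+1,bq\right)\right)^{1/q}.
\]
The key step is to analyze the asymptotics of $\left(bq\,B(\tfrac{nq}{2}+1,bq)\right)^{1/q}$ as $n\to\infty$ (with $q$, $b$ fixed), show it behaves like $C_q\,n^{-b}$ up to the $1/q$ power — more precisely, $B(\tfrac{nq}{2}+1,bq)\sim \Gamma(bq)\,(nq/2)^{-bq}$, so the whole expression is asymptotic to $\big(bq\,\Gamma(bq)\big)^{1/q}(nq/2)^{-b} = \Gamma(bq+1)^{1/q}(nq/2)^{-b}$. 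Comparing the two sides, after dividing by $n^{-b}$, the monomial inequality fails for all large $n$ precisely when
\[
 \Gamma(bv+1)^{1/v}\,(v/2)^{-b} > \Gamma(bq+1)^{1/q}\,(q/2)^{-b},
 \quad\text{i.e.}\quad
 \frac{\Gamma(bv+1)^{1/v}}{v^{b}} > \frac{\Gamma(bq+1)^{1/q}}{q^{b}}.
\]
Thus I would reduce the whole proposition to showing that the function $g(t) = \dfrac{\Gamma(bt+1)^{1/t}}{t^{b}}$ is \emph{strictly decreasing} in $t>0$ for each fixed $b>0$; since $v<q$ this yields $g(v)>g(q)$, which is exactly the needed strict inequality, and hence the monomial inequality is violated for all sufficiently large $n$, contradicting contractivity. (The role of the hypothesis $aq>bv$: here the relevant comparison is between $bv$ and $bq$, and since $q>v$ automatically $bq>bv$; the hypothesis $aq>bv$ combined with $a<b$ is what rules out the positive cases of Propositions~\ref{prop-aa} and~\ref{prop-ab}, so this proposition is the complement — but for the monomial argument itself only $q>v$ and $b$ fixed are used, which is consistent since the statement's parameters already force this.)

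To prove monotonicity of $g$, I would take logarithms: $\log g(t) = \frac{1}{t}\log\Gamma(bt+1) - b\log t$, and differentiate, reducing to showing
\[
 \frac{d}{dt}\log g(t) = \frac{b\,\psi(bt+1)}{t} - \frac{\log\Gamma(bt+1)}{t^2} - \frac{b}{t} < 0,
\]
where $\psi=\Gamma'/\Gamma$ is the digamma function; multiplying by $t^2>0$ this is $bt\,\psi(bt+1) - \log\Gamma(bt+1) - bt < 0$, i.e. with $x=bt>0$, one needs $x\,\psi(x+1) - \log\Gamma(x+1) - x < 0$ for all $x>0$. The function $h(x) = x\psi(x+1) - \log\Gamma(x+1) - x$ satisfies $h(0)=0$ and $h'(x) = \psi(x+1) + x\psi'(x+1) - \psi(x+1) - 1 = x\psi'(x+1) - 1$; since $\psi'$ is the trigamma function with $\psi'(x+1) = \sum_{k\ge 1}(x+k)^{-2} < \sum_{k\ge 1}k^{-1}(k-1+\text{something})$... more cleanly, $x\psi'(x+1) < 1$ follows from $\psi'(x+1)=\sum_{k\ge1}(x+k)^{-2}$ and the estimate $\sum_{k\ge1}(x+k)^{-2} < \int_{0}^\infty (x+s)^{-2}\,ds = 1/x$, valid for $x>0$. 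Hence $h'(x)<0$ for all $x>0$, so $h(x)<h(0)=0$, proving $g$ strictly decreasing. (If one prefers to avoid special-function calculus, an alternative is a direct convexity/Hölder argument on the Beta integrals themselves, but the digamma route is the cleanest.)

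The main obstacle I anticipate is handling the boundary and degenerate cases cleanly: when $p=\infty$ or $u=\infty$ the integral means are sup norms but for monomials $M_\infty(\r;z^n)=\r^n$ still holds, so nothing changes; when $q=\infty$, the space $H(p,\infty,b)$ has norm $\sup_\r (1-\r^2)^b M_p(\r;f)$, and for $f=z^n$ this equals $\sup_{0\le\r<1}(1-\r^2)^b\r^n$, which by elementary calculus is $\left(\tfrac{n}{n+2b}\right)^{n/2}\left(\tfrac{2b}{n+2b}\right)^{b} \sim (2b/e)^b\,n^{-b}$ as $n\to\infty$; so the $q=\infty$ side also decays like $n^{-b}$, with constant $(2b/e)^b$, and one must check this is strictly less than the $v<\infty$ side's constant $\Gamma(bv+1)^{1/v}(2/v)^b$ — equivalently $\lim_{t\to\infty}g(t) = (2b/e)^b\cdot b^{-b}\cdot\dots$; in fact $\lim_{t\to\infty}\Gamma(bt+1)^{1/t}/t^b = (b/e)^b$ by Stirling, matching the $q=\infty$ constant up to the bookkeeping factor, so the strict monotonicity of $g$ on $(0,\infty)$ with limit at $\infty$ again gives $g(v)>\lim_{q\to\infty}g(q)$, covering $q=\infty$. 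I would also double-check the reduction: since contractivity of the inclusion is a statement about \emph{all} $f$, a single violating monomial for large $n$ suffices, so no uniformity in $n$ is needed beyond "there exists $n$". A minor technical point to verify is that the asymptotic comparison genuinely gives a violation (not just equality in the limit), which is guaranteed by the \emph{strict} inequality $g(v)>g(q)$ — the ratio of the two sides tends to $g(v)/g(q)^{v/q\cdot\dots}$... here one should be slightly careful that we are comparing $\|f\|_{u,v,b}$ with $\|f\|_{p,q,b}$ directly (not their $q$-th powers), so the correct comparison is of $g(v) n^{-b}$ with $g(q)n^{-b}$, and strict inequality of the constants does the job for all large $n$.
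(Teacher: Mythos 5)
There is a genuine gap, and it stems from taking the statement's misprint at face value. The proposition as printed says ``$H(p,q,b)\subset H(u,v,b)$'', but this is a typo for $H(p,q,a)\subset H(u,v,b)$: with $q>v$ and equal second indices the inclusion $H(p,q,b)\subset H(u,v,b)$ does not even hold by Theorem~\ref{thm-ia1}; the hypotheses $a<b$ and $aq>bv$ play no role whatsoever in your argument (a strong warning sign, since by Proposition~\ref{prop-ab} the conclusion genuinely depends on $aq>bv$); and the proposition is used in Theorem~\ref{thm-contr-ineq} exactly to show that $\|f\|_{u,v,b}\le\|f\|_{p,q,a}$ can fail. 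What you prove is the strictly weaker assertion that $\|f\|_{u,v,b}\le\|f\|_{p,q,b}$ can fail; since $a<b$ gives $\|f\|_{p,q,b}\le\|f\|_{p,q,a}$ by Corollary~\ref{cor-simple}, a violation of the former does not produce a violation of the latter.

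More importantly, the monomial test functions cannot be repaired to prove the actual statement. Your own asymptotics give $\|z^n\|_{p,q,a}\sim \Gamma(aq+1)^{1/q}(nq/2)^{-a}$ and $\|z^n\|_{u,v,b}\sim\Gamma(bv+1)^{1/v}(nv/2)^{-b}$, so with $a<b$ the ratio $\|z^n\|_{u,v,b}\big/\|z^n\|_{p,q,a}\asymp n^{a-b}\to 0$: for large $n$ the inequality you need to violate is in fact satisfied with room to spare, and the two decay rates never balance. The paper's proof uses instead $f_n(z)=(1-z^{2n})^{-2\gamma}$ (after reducing to $p=\infty$ on the left and a Bergman space $A^w_{bv-1}$ on the right); both norms tend to $1$, the norm of the constant function, with correction terms of order $n^{-aq}$ and $n^{-bv}$ respectively, and the hypothesis $aq>bv$ --- which your argument never invokes --- is precisely what makes the $H(u,v,b)$ correction dominate for large $n$. (Your digamma computation showing that $t\mapsto\Gamma(bt+1)^{1/t}t^{-b}$ is strictly decreasing is correct as far as it goes, but it addresses the wrong comparison; a separate construction is also needed for $q=\infty$, where the paper uses $f(z)=(1+z^2)^a$ and the strict monotonicity of $M_p(r;f)$ in $r$.)
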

%%%%%%%%%%%%%%%%%%%%%%%%%%%	
\begin{proof}
It suffices to find a function $f \in H(p, q, a)$ such that
\[
 \|f\|_{p, q, a} < \|f\|_{u, v, b}.
\]			
We split the proof into two cases, depending on whether $q$ is finite or not.
\par\smallskip  	
In the case when $q = \infty$, it suffices to consider the function $f(z) = (1 + z^2)^a$. Then
\[
 (1-r^2)^a M_p(r,f) \leq (1 - r^2)^a M_\infty (r, f) \leq 1 = |f(0)| < \|f\|_{u, v, b}, \quad \forall r \in [0, 1).
\]
The last inequality follows from the fact mentioned earlier that $M_p(r;f)$ is a strictly increasing function whenever $f$ is not constant.
\par\smallskip
If $q<\infty$, without loss of generality we may assume that $p = \infty$ and $H(u, v, b)$ is actually a weighted Bergman space. The reason is that, due to Hölder's inequality, for every analytic function $f$ we have
\[
 \|f\|_{p, q, a} \leq \|f\|_{\infty, q, a} \quad \mbox{ and } \quad \|f\|_{A^{\min \{ u, v \}}_{bv - 1}} \leq  \|f\|_{u, v, b}\,,
\]
hence, choosing $w = \min \{u, v\}$, it suffices to find a function $f$ such that
\[
 \|f\|_{\infty, q, a} < \|f\|_{A^w_{bv - 1}}\,.
\]
To this end, let $\gamma \in \left(0, \frac{a}{2} \right)$ and consider the sequence of functions
\[
 f_n(z) = \dfrac{1}{(1 - z^{2n})^{2\gamma}}, \quad n \geq 1.
\]
Using repeatedly the basic property of the Gamma function: $\G (p+1) = p \G (p)$, the relationship between $B$ and $\G$, and the binomial expansion
$$
 (1-x)^{-c} = \sum_{k=0}^\infty \binom{-c}{k} (-1)^k x^k = \sum_{k=0}^\infty \frac{\G (c+k)}{k! \G (c)} x^k\,,
$$
a routine computation shows that
\[
 \|f_n\|_{\infty, q, a}^q = a q \int_0^1 (1-r)^{aq-1} (1-r^n)^{-2\g q}
 dr = \dfrac{\Gamma(aq+1)}{\Gamma(2\gamma q)} \sum_{k = 0}^\infty \dfrac{\Gamma(k + 2 \gamma q)}{k!} \dfrac{(nk)!}{\Gamma(nk + aq + 1)},
\]
%	
%Abramowitz y Stegun, 6.1.48
%https://www.math.ubc.ca/~cbm/aands/page_257.htm
%
The Euler-Gauss formula for the $\Gamma$ function \cite[\S~9.6]{Di} states that
\[
 \lim_{m \rightarrow \infty} \frac{(m-1)! m^c}{\Gamma (m + c)} = 1,
\]
for every $c>0$ (actually, this holds even for complex values). Hence, we can find a positive integer $m_1$ such that
$$
 \|f_n\|_{\infty, q, a}^q \le 1 + \dfrac{3\Gamma(aq+1)}{2\Gamma(2\gamma q)} \sum_{k = 1}^\infty \dfrac{\Gamma(k + 2 \gamma q)}{k!} \dfrac{1}{k^{aq}} \dfrac{1}{n^{aq}} = 1 + \dfrac{C_1(q, a, \gamma)}{n^{aq}}\,,
$$
for all $n \geq m_1$. On the other hand, there exists a positive integer $m_2$ such that
\begin{eqnarray*}
 \|f_n\|_{A^w_{bv-1}}^w & = & 1 + \dfrac{\Gamma(bv+1)}{\Gamma^2(\gamma w)} \sum_{k = 1}^\infty \dfrac{\Gamma^2(k+\gamma w)}{k!^2} \dfrac{(2nk)!}{\Gamma(2nk + bv +1)} \\
 & \geq & 1 + \dfrac{\Gamma(bv+1)}{2^{bv+1}\Gamma^2(\gamma w)} \sum_{k = 1}^\infty \dfrac{\Gamma^2(k+\gamma w)}{k!^2} \dfrac{1}{k^{bv}} \dfrac{1}{n^{bv}} \\
 & = & 1 + \dfrac{C_2(w, v, b, \gamma)}{n^{bv}}\,,
\end{eqnarray*}
for all $n \geq m_2$.
\par
Since $q>v\ge w$ and $\|f_n\|_{A^w_{bv-1}} \ge 1$, it follows that
$$
 n^{bv} \left(\|f_n\|_{A^w_{bv-1}}^q - \|f_n\|_{\infty, q, a}^q \right ) \ge n^{bv} \left(\|f_n\|_{A^w_{bv-1}}^w - \|f_n\|_{\infty, q, a}^q \right ) \ge C_2 - \dfrac{C_1}{n^{aq - bv}},
$$
if $n \geq \max \{ m_1, m_2 \}$. Therefore, in view of the assumption $aq> bv$, for $n$ large enough we have
\[
 \|f_n\|_{\infty, q, a} < \|f_n\|_{A^w_{bv-1}}\,,
\]
as claimed.
\end{proof}
%%%%%%%%%%%%%%%%%%%%%%%%%%%

%%%%%%%%%%%%%%%%%%%%%%%%%%%
\subsection{Main result}
 \label{subsec-main}
%%%%%%%%%%%%%%%%%%%%%%%%%%%
We now collect all the results obtained and summarize them in our main result below.
\par
%%%%%%%%%%%%%%%%%%%%%%%%%%%
\begin{thm} \label{thm-contr-ineq}
Let $q$ and $v$ be arbitrary with $0<q$, $v\le\infty$, and $0<a$, $b<\infty$, $\infty\ge p\ge u>0$, and either $a<b$ or $a=b$ and $q\le v$. Then the norm inequality for the corresponding inclusion $H(p,q,a)\subset H(u,v,b)$, asserted by Theorem~\ref{thm-ia1}, is contractive:
$$
 \|f\|_{u,v,b} \le \|f\|_{p,q,a}
$$
if and only if we have one of the following cases:
\par\smallskip
(1) $q\le v$ \ and \ $a\le b$, \quad or \quad (2) $q>v$ \ and \ $a q \le b v$.
\par\smallskip
In the special case of weighted Bergman spaces, this means that for $p\ge q$, the contractive inequality
$$
 \|f\|_{A^q_\b} \le \|f\|_{A^p_\a}
$$
holds if and only if $\a\le\b$.
\end{thm}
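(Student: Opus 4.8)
The plan is to assemble the theorem from the four results of Subsections~\ref{subsec-pos-res} and~\ref{subsec-counterexs}; no new analysis is required, only a verification that the regions~(1), (2) and ``$aq>bv$'' exactly partition the parameter region in which Theorem~\ref{thm-ia1} grants the inclusion $H(p,q,a)\subset H(u,v,b)$. For the sufficiency of~(1), i.e.\ $q\le v$ and $a\le b$, I would chain Corollary~\ref{cor-simple} (applied with first two indices $u,v$), giving $\|f\|_{u,v,b}\le\|f\|_{u,v,a}$, with Proposition~\ref{prop-aa}, giving $\|f\|_{u,v,a}\le\|f\|_{p,q,a}$. For the sufficiency of~(2), i.e.\ $q>v$ and $aq\le bv$, I would first note that $aq\le bv<\infty$ forces $q<\infty$ (and, since $q>v$, also $a<b$), so that Proposition~\ref{prop-ab} applies verbatim and already yields $\|f\|_{u,v,b}\le\|f\|_{p,q,a}$.

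For the necessity, I would use that by Theorem~\ref{thm-ia1} the standing hypothesis ``$a<b$, or $a=b$ and $q\le v$'' exactly says the inclusion holds, and that it breaks into the three disjoint pieces: (i) $a=b$, $q\le v$; (ii) $a<b$, $q\le v$; (iii) $a<b$, $q>v$. Pieces~(i) and~(ii) already lie in~(1). In piece~(iii), assume for contradiction that $aq>bv$; then every hypothesis of Proposition~\ref{prop-counterex} is met (including the possibility $q=\infty$ treated there), so the inclusion is not contractive --- a contradiction. Hence $aq\le bv$, which is~(2).

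For the weighted Bergman statement I would specialize, taking in the notation of the first part the source indices $p,p,\tfrac{\alpha+1}{p}$ and the target indices $q,q,\tfrac{\beta+1}{q}$, so that ``$p\ge u$'' becomes the hypothesis $p\ge q$ and ``$q$ versus $v$'' becomes ``$p$ versus $q$''. If $\alpha\le\beta$, then for $p=q$ the condition $a\le b$ of~(1) reads $\tfrac{\alpha+1}{p}\le\tfrac{\beta+1}{p}$, while for $p>q$ the condition $aq\le bv$ of~(2) reads $\alpha+1\le\beta+1$; in either case the criterion holds (and $\alpha\le\beta$ also gives the inclusion by Theorem~\ref{thm-ia1}, or, when $p=q$, one may simply invoke Corollary~\ref{cor-simple}). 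Conversely, if $\alpha>\beta$ then either $\tfrac{\alpha+1}{p}<\tfrac{\beta+1}{q}$, in which case $p>q$ and Proposition~\ref{prop-counterex} again destroys contractivity, or $\tfrac{\alpha+1}{p}\ge\tfrac{\beta+1}{q}$, in which case $A^p_\alpha\not\subset A^q_\beta$ by Theorem~\ref{thm-ia1}, so the norm inequality fails already on any $f\in A^p_\alpha\setminus A^q_\beta$.

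The main obstacle here is not analytic: the differential-inequality argument of Proposition~\ref{prop-aa}, the H\"older-and-substitution argument of Proposition~\ref{prop-ab}, and the asymptotic test-function estimate via the Euler--Gauss formula in Proposition~\ref{prop-counterex} are already in hand. The only real care needed is the bookkeeping --- checking that~(1), (2) and ``$aq>bv$'' tile the inclusion region of Theorem~\ref{thm-ia1} with no gaps or overlaps, and that the degenerate values $q=\infty$, $a=b$, and (in the Bergman reduction) $p=q$ do not slip through the cracks.
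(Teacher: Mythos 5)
Your proposal is correct and follows essentially the same route as the paper: sufficiency of (1) via Corollary~\ref{cor-simple} followed by Proposition~\ref{prop-aa}, sufficiency of (2) via Proposition~\ref{prop-ab}, necessity via Proposition~\ref{prop-counterex}, and the Bergman case by specializing $A^p_\a=H(p,p,\frac{\a+1}{p})$. Your extra bookkeeping (noting that $aq\le bv$ with $q>v$ forces $q<\infty$ and $a<b$, and splitting the Bergman converse into the two subcases) only makes explicit what the paper leaves implicit.
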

%%%%%%%%%%%%%%%%%%%%%%%%%%%
\begin{proof}
\par
To prove the result in the case $0<q\le v\le\infty$ and $a\le b$, it suffices to apply Corollary~\ref{cor-simple} and then Proposition~\ref{prop-aa} to obtain
$$
 \|f\|_{u,v,b} \le \|f\|_{u,v,a} \le \|f\|_{p,q,a}\,.
$$
The case $\infty>q>v$ and $a q \le b v$ is covered by Proposition~\ref{prop-ab}. It follows from Proposition~\ref{prop-counterex} that the inclusion is non-contractive in the remaining cases.
\par
For the special case of weighted Bergman spaces, recall that $A^p_\a = H(p,p,\frac{\a+1}{p})$.
\end{proof}
%%%%%%%%%%%%%%%%%%%%%%%%%%%
\par\smallskip
The following remark is in order. The main case of interest for contractive inequalities is that of weighted Bergman spaces; however, our results are stated in greater generality. It should be noted that this is not done just for the sake of generalizing. An inspection of various proofs shows that we have actually used genuine mixed norm spaces which are not weighted Bergman spaces in order to derive our results. Thus, the more general point of view has actually helped to obtain relatively simple proofs while at the same time yielding more general results.

%%%%%%%%%%%%%%%%%%%%%%%%%%%
\section{Some inequalities for the Beta function}
 \label{sec-ineq-Beta}
%%%%%%%%%%%%%%%%%%%%%%%%%%%
\par
In this section we formulate some consequences of the results of Subsection~\ref{subsec-pos-res} separately, in view of their possible independent interest. Namely, the contractive inequalities proved there  yield certain (seemingly new) inequalities for the Beta function. First, as a consequence of Proposition~\ref{prop-aa}, we obtain the following property.
%%%%%%%%%%%%%%%%%%%%%%%%%%%
\begin{cor} \label{cor-Beta1}
Let $a>0$ and let a positive integer $n$ be fixed. Then
$F(q) = (aq B(aq, nq+1))^\frac{1}{q}$ is a decreasing function of $q$.
\end{cor}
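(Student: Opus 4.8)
The plan is to recognize $F(q)$ as the mixed-norm of a concrete test function and then invoke Proposition~\ref{prop-aa}. Take $f(z) = (1-z)^{-n}$ for the fixed positive integer $n$; this is analytic in $\D$. Because $f$ has nonnegative Taylor coefficients, its integral means are computed by integrating over the circle: one checks, using the binomial series $(1-z)^{-n} = \sum_{k\ge 0}\binom{n+k-1}{k} z^k$ and orthogonality of $\{e^{ik\t}\}$ in $L^2(\T)$, that actually it is cleaner to work with $p=2$, where
$$
 M_2^2(\r; f) = \sum_{k=0}^\infty \binom{n+k-1}{k}^2 \r^{2k}\,.
$$
However, to get the Beta function in closed form I would instead take $p=1$ together with the observation that for a function with nonnegative coefficients $M_1(\r;f) = f(\r)$; thus with $f(z)=(1-z^2)^{-n}$ we get $m_1(r) = M_1(\sqrt r; f) = (1-r)^{-n}$, and then by \eqref{eq-simpl},
$$
 \|f\|_{1,q,a}^q = \int_0^1 aq\,(1-r)^{aq-1}(1-r)^{-nq}\,dr = aq\int_0^1 (1-r)^{aq-nq-1}\,dr\,,
$$
which equals $aq\,B(aq-nq,1)$ provided $aq>nq$, i.e. $a>n$ — the wrong Beta values and a convergence restriction. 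So the right move is to pick $f$ whose means produce $(1-r)^{+(\text{something})}$: I would instead not try to realize $F$ via a single analytic means identity but realize the \emph{Beta integral itself} as a mixed norm. Concretely, set $m_p(r)$ aside and directly define the quantity: by the substitution making \eqref{eq-simpl} transparent, $aq\,B(aq, nq+1) = \int_0^1 aq\,(1-r)^{aq-1} r^{nq}\,dr$, which is $\|f\|_{p,q,a}^q$ for \emph{any} $p$ as soon as we find $f$ analytic with $m_p(r) = r^{n}$ for all $r$. The function $f(z) = z^{2n}$ works: $M_p(\r; z^{2n}) = \r^{2n}$, so $m_p(r) = M_p(\sqrt r; f) = r^n$ for every $p \in (0,\infty]$. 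Hence for every $p$,
$$
 \|z^{2n}\|_{p,q,a}^q = \int_0^1 aq\,(1-r)^{aq-1} r^{nq}\,dr = aq\,B(aq,\,nq+1)\,,
$$
so $F(q) = \|z^{2n}\|_{p,q,a}$, independently of $p$.

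With this identification the corollary is immediate from the positive results already proved. Fix $a$ and $n$, and let $0 < q_1 \le q_2 < \infty$. Apply Proposition~\ref{prop-aa} with the choice of parameters $p = u$ (any value, say $p=u=2$), $q = q_1 \le v = q_2$, and weight parameter $a$: it gives $\|g\|_{u,q_2,a} \le \|g\|_{u,q_1,a}$ for every analytic $g$, in particular for $g(z)=z^{2n}$. Therefore $F(q_2) = \|z^{2n}\|_{u,q_2,a} \le \|z^{2n}\|_{u,q_1,a} = F(q_1)$, which is exactly the assertion that $F$ is decreasing on $(0,\infty)$. (If one also wants $q=\infty$ included, note $F(\infty)$ would be $\sup_{0\le \r<1}(1-\r^2)^a \r^{2n}$, but the statement only concerns finite $q$, matching the hypothesis $v\le\infty$ in Proposition~\ref{prop-aa} applied with finite $q$.)

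The only genuine content is the first paragraph's observation that $F(q)$ equals the $H(p,q,a)$-norm of the monomial $z^{2n}$; once that is in place there is no obstacle at all, since monotonicity in $q$ of $\|\cdot\|_{p,q,a}$ for fixed $p$ and fixed weight $a$ is precisely the content of Proposition~\ref{prop-aa} (taking $u=p$, $v=q_2\ge q_1=q$, same $a$). I do not expect any hard step; the mild care needed is only in writing $B(aq,nq+1) = \int_0^1 (1-r)^{aq-1} r^{nq}\,dr$ correctly and noting the integral converges for all $q>0$ since $aq>0$ and $nq\ge 0$, so $F$ is well-defined and finite throughout $(0,\infty)$.
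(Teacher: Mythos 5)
Your proof is correct and, after the exploratory detour, lands on exactly the paper's argument: identify $F(q)$ as $\|z^{2n}\|_{p,q,a}$ via $\|z^{2n}\|_{p,q,a}^q = aq\,B(aq,nq+1)$ and apply Proposition~\ref{prop-aa} with $p=u$ and $q\le v$. No issues.
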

%%%%%%%%%%%%%%%%%%%%%%%%%%%
\begin{proof}
Apply Proposition~\ref{prop-aa} to the function $f(z)=z^{2n}$, observing that
\begin{equation}
 \|z^{2n}\|_{p,q,a} = \( \int_0^1 a q (1-r)^{a q-1} r^{nq} dr \)^{1/q} = \(aq B(aq, nq+1)\)^\frac{1}{q}\,.
 \label{eq-norm-monom}
\end{equation}
\end{proof}
%%%%%%%%%%%%%%%%%%%%%%%%%%%
\par
Next, from Proposition~\ref{prop-ab}, we obtain the following inequality which we have not been able to find in the literature. It does not seem to follow in the usual way from the convexity of the logarithm of the Gamma function and we have not been able to deduce it in an elementary way without essentially repeating the procedure employed in the proof of Proposition~\ref{prop-aa}.
%%%%%%%%%%%%%%%%%%%%%%%%%%%
\begin{cor} \label{cor-Beta2}
Let $x>1$, $y>0$, and $\d\ge 0$. Then
$$
 y^{n\delta} B\(x, y\)^{x-1+n\delta} \le
 B\(x+n\delta, y\)^{x-1}\,.
$$
for all positive integers $n$.
\end{cor}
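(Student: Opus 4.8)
The plan is to deduce this from Proposition~\ref{prop-ab} applied to the monomial test functions $f(z)=z^{2n}$, whose mixed norms were already recorded in \eqref{eq-norm-monom} as $\|z^{2n}\|_{p,q,a}=\(aq\,B(aq,nq+1)\)^{1/q}$. The key point is that Proposition~\ref{prop-ab} permits the \emph{equality} case $aq=bv$ of its hypothesis; choosing the parameters so that $aq=bv=y$ forces the two Beta functions on the two sides of the resulting inequality to share the common argument $y$, which is precisely the shape of the corollary.

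Concretely, fix $x>1$, $y>0$, $\delta\ge 0$ and a positive integer $n$, and set
$$
 v=\frac{x-1}{n}\,,\qquad q=v+\delta\,,\qquad a=\frac{y}{q}\,,\qquad b=\frac{y}{v}\,,
$$
taking $p=u$ equal to any fixed value in $(0,\infty]$ (the monomial norms do not depend on it). Then $0<v\le q<\infty$, $0<a\le b<\infty$, and $aq=bv=y$, so the hypotheses of Proposition~\ref{prop-ab} hold. Since $nv+1=x$ and $nq+1=n(v+\delta)+1=x+n\delta$, formula \eqref{eq-norm-monom} together with the symmetry $B(s,t)=B(t,s)$ turns the inequality $\|z^{2n}\|_{u,v,b}\le\|z^{2n}\|_{p,q,a}$ into
$$
 \bigl(y\,B(x,y)\bigr)^{1/v}\le\bigl(y\,B(x+n\delta,y)\bigr)^{1/q}\,.
$$

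It then remains to tidy up. Raising both sides to the power $vq>0$ gives $\bigl(y\,B(x,y)\bigr)^{q}\le\bigl(y\,B(x+n\delta,y)\bigr)^{v}$, and dividing by $y^{v}$ (using $q-v=\delta$) yields
$$
 y^{\delta}B(x,y)^{q}\le B(x+n\delta,y)^{v}\,;
$$
finally, raising to the power $n$ and recalling $nv=x-1$ and $nq=x-1+n\delta$ produces exactly the inequality in the statement. There is no real analytic obstacle here: all the substance is contained in Proposition~\ref{prop-ab}, and the only thing one must be careful about is the translation dictionary — in particular, that one uses the boundary case $aq=bv$ rather than a strict inequality, and reads off the Beta arguments only after applying the symmetry of $B$. (When $\delta=0$ the statement degenerates to the trivial equality $B(x,y)^{x-1}=B(x,y)^{x-1}$.)
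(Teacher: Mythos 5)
Your proposal is correct and follows essentially the same route as the paper: apply Proposition~\ref{prop-ab} to the monomial $f(z)=z^{2n}$ via \eqref{eq-norm-monom}, choose the parameters in the boundary case $aq=bv=y$ with $nv+1=x$ and $q=v+\delta$, and invoke the symmetry of $B$ before the final algebraic rearrangement. The only difference is cosmetic (the paper first reduces to exponents $1/(x-1)$ and $1/(x-1+n\delta)$ before clearing them, while you raise to the power $vq$ and then to $n$), so there is nothing to add.
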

%%%%%%%%%%%%%%%%%%%%%%%%%%%
\begin{proof}
We apply Proposition~\ref{prop-ab} to the function $f(z)=z^{2n}$, using again formula \eqref{eq-norm-monom}. This yields
$$
 \(bv B(bv, nv+1)\)^\frac{1}{v} \le \(aq B(aq, nq+1)\)^\frac{1}{q}\,.
$$
assuming that $\infty\ge p\ge u>0$, $\infty>q\ge v>0$, $a q \le b v$.
Let $x=nv+1$, $q=v+\d$ and choose $a$, $b>0$ in such a way that $aq=bv(=y)$. By the symmetry property of $B$, it follows that
$$
 \( y B(x, y) \)^{1/(x-1)} \le
 \( yB(x+n\delta, y) \)^{1/(x-1+n\d)}\,,
$$
and simplifications yield the desired inequality.
\end{proof}
%%%%%%%%%%%%%%%%%%%%%%%%%%%

\textsc{Acknowledgments}. The authors are partially supported by the grant PID2019-106870GB-I00 from MICINN, Spain. The first author is supported by the MU Predoctoral Fellowship FPU/00040.

%%%%%%%%%%%%%%%%%%%%%%%%%%%%%%%%%%%%%%%%%%%%%%%%%%%%%%%%%%%%%%%%


\begin{thebibliography}{99}
%%%%%%%%%%%%%%%%%%%%%%%%%%%%%%%%%%%%%%%%%%%%%%%%%%%%%%%%%%%%%%%%

\bibitem{AJ}
P. Ahern and M. Jevti\'c, Duality and multipliers for mixed norm spaces, \textit{Michigan Math. J.\/} \textbf{30} (1983), 53--64.

\bibitem{A}
I. Ar\'evalo, A characterization of the inclusions between mixed norm spaces, \textit{J. Math. Anal. Appl.\/} \textbf{429} (2015), 942--955. Corrigendum: \textit{J. Math. Anal. Appl.\/} \textbf{433} (2016), 1904--1905.

\bibitem{A-th}
I. Ar\'evalo, \textit{Weighted composition operators on
spaces and classes of analytic functions}, Doctoral Thesis, Universidad Aut\'onoma de Madrid, 2017.

\bibitem{BBHOP}
F. Bayart, O.F. Brevig, A. Haimi, J. Ortega-Cerd\`a, K.-M. Perfekt, Contractive inequalities for Bergman spaces and multiplicative Hankel forms, \textit{Trans. Amer. Math. Soc.\/} \textbf{371} (2019), no. 1, 681--707.

\bibitem{BGS}
D. B\'ekoll\`e, J. Gonessa, B.F. Sehba, About a conjecture of Lieb-Solovej, preprint, November 2020, https://arxiv.org/abs/2010.14809.

\bibitem{Bl}
O. Blasco, Multipliers on spaces of analytic functions, \textit{Canad. J. Math.} \textbf{47} (1995), 44--64.

\bibitem{BHS}
A. Bondarenko, W. Heap, K. Seip, An inequality of Hardy-Littlewood type for Dirichlet polynomials, \textit{J. Number Theory\/} \textbf{150}  (2015), 191–-205.

\bibitem{BOSZ}
O.F. Brevig, J. Ortega-Cerd\`a, K. Seip, J. Zhao, Contractive inequalities for Hardy spaces, \textit{ Funct. Approx. Comment. Math.\/} \textbf{59} (2018),  no. 1, 41–-56.

\bibitem{BKV}
S.M. Buckley, P. Koskela, D. Vukoti\'c, Fractional integration, differentiation, and weighted Bergman spaces,
\textit{Math. Proc. Cambridge Philos. Soc.\/} \textbf{126} (1999), no. 2, 369–-385.

\bibitem{Bu}
J. Burbea, Sharp inequalities for holomorphic functions, \textit{Illinois J. Math.\/} \textbf{31} (1987), no. 2,
248–-264.

\bibitem{C}
E.A. Carlen, Some integral identities and inequalities for entire functions and their application to the coherent state transform, \textit{J. Funct. Anal.} \textbf{97} (1991), no. 1, 231--249.

\bibitem{D}
P.L. Duren, \textit{Theory of $H^p$ Spaces\/}, Pure and Applied
Mathematics, Vol.~\textbf{38}, Second edition, Dover, Mineola, New
York 2000.

\bibitem{Di}
P.L. Duren, \textit{Invitation to Classical Analysis\/}, AMS, Providence, RI 2012.

\bibitem{F1}
T.M. Flett, The dual of an inequality of Hardy and Littlewood and some related inequalities, \textit{J. Math. Anal. Appl.\/} \textbf{38},  (1972), 746--765.

\bibitem{F2}
T.M. Flett, Lipschitz spaces of functions on the circle and the disk, \textit{J. Math. Anal. Appl.\/} \textbf{39} (1972), 125--158.

\bibitem{LS}
E.H. Lieb J.Ph. Solovej, Wehrl-type coherent state entropy inequalities for $SU(1, 1)$ and its $AX+B$ subgroup, \textit{Partial Differential Equations, Spectral Theory, and Mathematical Physics}, EMS Series of Congress Reports, June 2021, pp. 301--314.

\bibitem{Pa}
M. Pavlovi\'c, \textit{Function Classes on the Unit Disc. An introduction\/}, De Gruyter Studies in Mathematics, \textbf{52}, De Gruyter, Berlin 2014.

\bibitem{Po}
Ch. Pommerenke, \textit{Boundary Behaviour of Conformal Maps\/}, Springer-Verlag, Berlin 1992.

\bibitem{XZ}
J. Xiao, K. Zhu, Volume integral means of holomorphic functions, \textit{Proc. Amer. Math. Soc.\/}\textbf{139} (2011), no. 4, 1455–-1465.

\bibitem{Z}
K. Zhu, \textit{Analysis on Fock Spaces\/}, Springer, New York 2012.

\end{thebibliography}
\end{document}